\numberwithin{equation}{section}
\newtheorem{theorem}{Theorem}[section]
\newtheorem{lemma}[theorem]{Lemma}
\theoremstyle{definition}
\theoremstyle{remark}
\newtheorem{remark}[theorem]{Remark}
\newtheorem{example}[theorem]{Example}
\newcommand{\bl}{\bigl\langle}
\newcommand{\br}{\bigr\rangle}
\newcommand{\bld}[1]{\boldsymbol{#1}}
\newcommand{\bn}{\bld{n}}
\newcommand{\uu}{\mathsf{u}}
\newcommand{\ww}{\mathsf{w}}
\newcommand{\lla}{\mathsf{l}}
\newcommand{\Lla}{\Lambda}
\newcommand{\LL}{\mathsf{L}}
\newcommand{\pdt}{\partial_{\Delta t}}
\newcommand{\pt}{\partial_{t}}
\newcommand{\dt}{\Delta t}
\def\fU{\mathfrak{U}}
\def\fW{\mathfrak{W}}
\def\fL{\mathfrak{L}}
\def\cG{\mathcal{G}}
\def\cH{\mathcal{H}}
\def\cR{\mathcal{R}}
\title[Parabolic-Parabolic Interface Problems]{An Improved Robin-Robin Coupling Method for Parabolic-Parabolic Interface Problems}
\author{Erik Burman}
\address{$^1$Department of Mathematics, University College London, London, UK–WC1E 6BT, United Kingdom}
\email{e.burman@ucl.ac.uk}
\author{Miguel A. Fern\'andez}
\address{$^2$Sorbonne Universit\'e \& CNRS, UMR 7598 LJLL, 75005 Paris France -- Inria Paris, 75012 Paris, France}
\email{miguel.fernandez@inria.fr}
\author{Johnny Guzm\'an}
\address{$^3$Division of Applied Mathematics, Brown University, 182 George Street, Providence, RI 02912, USA}
\email{johnny\_guzman@brown.edu}
\author{Sijing Liu}
\address{$^4$Department of Mathematical Sciences, Worcester Polytechnic Institute, 100 Institute Road, Worcester, MA 01609, USA}
\email{sliu13@wpi.edu}
\begin{document}

\begin{abstract}
  We consider a loosely coupled, non-iterative Robin-Robin coupling method proposed and analyzed in {\em [Numer. Algorithms, 99:921-948, 2025]} for a parabolic-parabolic interface problem. We modify the first step of the scheme so that several error difference quantities remain higher order convergence without requiring additional assumptions. Numerical results are presented to support our findings.
\end{abstract}

\maketitle

\section{Introduction}

For the Robin-Robin coupling method considered in \cite{burman2024estimates}, we observed in \cite{erratum} that the following quantities
\begin{equation*}
Z^N(\pdt W, \pdt U, \pdt \Lambda) + \sum_{n=1}^{N-1} S^{n+1}(\pdt W, \pdt U, \pdt \Lambda)
\end{equation*}
and
\begin{equation*}
Z^N(\pdt^2 W, \pdt^2 U, \pdt^2 \Lambda) + \sum_{n=2}^{N-1} S^{n+1}(\pdt^2 W, \pdt^2 U, \pdt^2 \Lambda)
\end{equation*}
do not converge as $O((\dt)^2)$ as expected, where $Z$ and $S$ are defined as follows:
\begin{alignat*}{1}
Z^{n+1}(\psi,\phi,\theta):= & \frac{1}{2} \|\phi^{n+1}\|_{L^2(\Omega_f)}^2+\frac{1}{2}\|\psi^{n+1}\|_{L^2(\Omega_s)}^2 +\frac{\dt \alpha}{2} \|\phi^{n+1}\|_{L^2(\Sigma)}^2+ \frac{\dt}{2\alpha} \|\theta^{n+1}\|_{L^2(\Sigma)}^2, \\ 
S^{n+1}(\psi, \phi,\theta):= & \dt\Big(\nu_f  \|\nabla \phi^{n+1}\|_{L^2(\Omega_f)}^2+  \nu_s \|\nabla \psi^{n+1}\|_{L^2(\Omega_s)}^2\Big) +  \frac{1}{2} \Big(\|\psi^{n+1}-\psi^n\|_{L^2(\Omega_s)}^2 \\
&+ \|\phi^{n+1}-\phi^n\|_{L^2(\Omega_f)}^2\Big)+ \frac{\alpha \dt}{2}\|\phi^{n+1}-\phi^n+  \frac{1}{\alpha} ( \theta^{n+1}-\theta^n)\|_{L^2(\Sigma)}^2.
\end{alignat*}
In particular, we observed that the sum terms in $\sum S^{n+1}$ do not converge as $O((\dt)^2)$, even though we do observe $O((\dt)^2)$ convergence for the terms in $Z^N$ numerically in \cite{burman2024estimates}. However, the $O((\dt)^2)$ convergence for the above expressions is a crucial ingredient for the defect correction methods proposed and analyzed in \cite{ppcorrection}. We examined this issue in \cite{erratum} and pointed out that, under suitable assumptions on the exact solution of the interface problem, we can obtain $O((\dt)^2)$ convergence for both the terms in $Z^N$ and in $\sum S^{n+1}$. Specifically, the additional assumptions state that some of the time derivatives of the exact solutions must vanish during the initial steps. 

In this note, we present a modified scheme that differs from that in \cite{burman2023loosely,burman2024estimates} only at the first time step, but improves the convergence rates of $\sum S^{n+1}$ to $O((\dt)^2)$ without requiring any additional assumptions on the exact solutions of the interface problem. However, the first three time steps of the modified scheme are coupled, so it requires solving a large system to obtain the numerical solutions at these time levels. We also briefly discuss the analysis of this modified scheme. The main idea is to modify the first time step so that the initial terms affecting the convergence rates are canceled during the analysis. We follow the notation used in \cite{burman2024estimates} throughout this note.

\section{The Original Robin-Robin Coupling Method}
We consider the interface problem
\begin{subequations}\label{eq:ppinterface}
\begin{alignat}{2}
\pt \uu-\nu_f \Delta \uu=&0, \quad && \text{ in } [0, T] \times \Omega_f, \nonumber\\
\uu(0, x)=& \uu_0(x), \quad  && \text{ in } \Omega_f, \\
\uu=&0,  \quad && \text{ on }  [0, T] \times \Gamma^f_D,\nonumber\\
\frac{\partial\uu}{\partial n}=&0,  \quad && \text{ on }  [0, T] \times \Gamma^f_{Ne},\nonumber\\
\nonumber\\
\pt \ww-\nu_s \Delta \ww=&0, \quad && \text{ in } [0, T] \times \Omega_s, \nonumber\\
\ww(0, x)=& \ww_0(x), \quad  && \text{ in } \Omega_s, \\
\ww=&0,  \quad && \text{ on }  [0, T] \times \Gamma^s_D,\nonumber\\
\frac{\partial\ww}{\partial n}=&0,  \quad && \text{ on }  [0, T] \times \Gamma^s_{Ne},\nonumber\\
\nonumber\\
\ww - \uu =& 0, \quad && \text{ on } [0,T] \times \Sigma, \\
\nu_s \nabla \ww \cdot \bn_s + \nu_f \nabla \uu \cdot \bn_f =& 0, \quad && \text{ on } [0,T] \times \Sigma,
\end{alignat}  
\end{subequations}
where $\bn_f$ and $\bn_s$ are the outward facing normal vectors for $\Omega_f$ and $\Omega_s$, respectively. We assume that the initial data is smooth and that $\uu$ and $\ww$ is smooth on $\Omega_f$ and $\Omega_s$ respectively.   

The Robin–Robin splitting method is given as in \cite{burman2024estimates}. Find $w^{n+1}  \in V_s$, $u^{n+1} \in V_f$, and $\lambda^{n+1} \in V_g $ such that, for $n\geq 0$,
\begin{subequations}\label{eq:orr}
\begin{alignat}{2}
(\pdt w^{n+1}, z)_s+\nu_s(\nabla w^{n+1}, \nabla z)_s + \alpha \bl w^{n+1} - u^n, z \br + \bl \lambda^n, z\br=&0 , \quad && z \in V_s, \\
(\pdt u^{n+1}, v)_f+\nu_f(\nabla u^{n+1}, \nabla v)_f- \bl \lambda^{n+1}, v\br=&0, \quad &&v \in V_f,  \\
\bl \alpha(u^{n+1}-w^{n+1})+(\lambda^{n+1}-\lambda^n), \mu \br=&0, \quad && \mu \in V_g. 
\end{alignat}
\end{subequations}

\section{The Improved Method}

The improved scheme is defined as follows. For $n=0$, find $w^{1} \in V_s$, $u^{1} \in V_f$, and $\lambda^{1} \in V_g $ such that 
\begin{subequations}\label{eq:firststep}
\begin{alignat}{2}
(\pdt w^{2}, z)_s+\nu_s(\nabla w^{1}, \nabla z)_s + \alpha \bl w^{1} +u^2-2u^1, z \br + \bl 2\lambda^1-\lambda^2, z\br\\
=(\pdt\ww^2-\pdt\ww^1,z)_s+\alpha\dt\bl G_1^2, z\br-\dt\bl G_2^2,z\br, &\quad && z \in V_s, \nonumber\\
(\pdt u^{2}, v)_f+\nu_f(\nabla u^{1}, \nabla v)_f+ \bl\alpha(\delta u^3-\delta u^2)+\lambda^{3}-2\lambda^2, v\br\\
=(\pdt\uu^2-\pdt\uu^1,v)_f+\alpha\dt\bl G_1^3, v\br+\dt\bl G_2^3,v\br, &\quad &&v \in V_f, \nonumber \\
\bl \alpha(-u^{3}+2u^2-w^{1}), \mu \br+\bl \lambda^2-\lambda^1, \mu \br\\
=-\alpha\dt\bl G_1^3,\mu\br+\dt\bl G_2^2,\mu\br&, \quad && \mu \in V_g. \nonumber
\end{alignat}
\end{subequations}
and for $n\ge1$, find $w^{n+1}  \in V_s$, $u^{n+1} \in V_f$, and $\lambda^{n+1} \in V_g $ such that,
\begin{subequations}\label{eq:nextsteps}
\begin{alignat}{2}
(\pdt w^{n+1}, z)_s+\nu_s(\nabla w^{n+1}, \nabla z)_s + \alpha \bl w^{n+1} - u^n, z \br + \bl \lambda^n, z\br=&0 , \quad && z \in V_s, \label{s1}\\
(\pdt u^{n+1}, v)_f+\nu_f(\nabla u^{n+1}, \nabla v)_f- \bl \lambda^{n+1}, v\br=&0, \quad &&v \in V_f,  \label{s2}\\
\bl \alpha(u^{n+1}-w^{n+1})+(\lambda^{n+1}-\lambda^n), \mu \br=&0, \quad && \mu \in V_g.  \label{s3}
\end{alignat}
\end{subequations}

Notice that this modified scheme differs from the original scheme only at the very first step when $n=0$. However, the first step \eqref{eq:firststep} requires information about the numerical solutions at $t_2$ and $t_3$, as well as the exact solutions at $t_1$, $t_2$, and $t_3$. This implies that the first three steps of the method are coupled, hence, they should be solved as a single large system. We also believe that the exact solutions on the right-hand side of the method can be replaced by high-order numerical approximations. This could potentially be achieved using a higher-order monolithic method for the first few steps.

We now briefly discuss the analysis of the modified scheme \eqref{eq:firststep}-\eqref{eq:nextsteps}. For $n=0$, we write the exact solution as follows:

\begin{alignat*}{2}
(\pdt \ww^2, z)_s+\nu_s(\nabla \ww^1, \nabla z)_s + \alpha \bl \ww^1 + \uu^2- 2\uu^1, z \br + \bl 2\lla^1-\lla^2, z\br=& -(h_3^{2},z)_s+\alpha\bl g_1^2,z\br - \bl g_2^2,z\br, \quad && z \in V_s, \\
(\pdt \uu^2, v)_f+\nu_f(\nabla \uu^1, \nabla v)_f+ \bl \alpha(\delta\uu^3-\delta\uu^2)+\lla^3-2\lla^2, v\br=&\alpha\dt\bl G_1^3, v\br+\dt\bl G_2^3,v\br-(h_4^2,v)_f, \quad &&v \in V_f,  \\
\alpha\bl -\uu^3+2\uu^2-\ww^1,\mu\br+\bl\lla^2-\lla^1,\mu\br=&-\alpha\dt\bl G_1^3,\mu\br+\bl g_2^2,\mu\br, \quad && \mu \in V_g.  
\end{alignat*}

For $n\ge1$, we write the exact solution as follows:
\begin{subequations}\label{s}
\begin{alignat}{2}
(\pdt \ww^{n+1}, z)_s+\nu_s(\nabla \ww^{n+1}, \nabla z)_s + \alpha \bl \ww^{n+1} - \uu^n, z \br + \bl \lla^n, z\br=& L^{n+1}_1(z) - L^{n+1}_3(z)  , \quad && z \in V_s, \label{s1}\\
(\pdt \uu^{n+1}, v)_f+\nu_f(\nabla \uu^{n+1}, \nabla v)_f- \bl \lla^{n+1}, v\br=&L^{n+1}_2(v), \quad &&v \in V_f,  \label{s2}\\
\bl \alpha(\uu^{n+1}-\ww^{n+1})+(\lla^{n+1}-\lla^n), \mu \br=&L^{n+1}_3(\mu), \quad && \mu \in V_g,  \label{s3}
\end{alignat}
\end{subequations}
where 
\begin{alignat*}{1}
L^{n+1}_1(z) :=& \alpha \bl g_1^{n+1}, z \br - ( h_1^{n+1}, z)_s,\\
L^{n+1}_2(v):=&-(h_2^{n+1},v)_f,\\
L^{n+1}_3(\mu):=&  \bl g_2^{n+1}, \mu \br,
\end{alignat*}
and
\begin{alignat*}{2}
h_1^{n+1} &:= \pt \ww^{n+1} - \pdt \ww^{n+1}, \quad && g_1^{n+1} := \uu^{n+1}-\uu^n, \\
h_2^{n+1} &:= \pt \uu^{n+1} - \pdt \uu^{n+1}, \quad && g_2^{n+1} := \lla^{n+1} - \lla^n,\\
h_3^{n+1} &:= \pt \ww^{n} - \pdt \ww^{n+1}, \quad && h_4^{n+1} := \pt \uu^{n} - \pdt \uu^{n+1},\\
G_1^{n+1} &:= \pdt g_1^{n+1},\quad && G_2^{n+1}:= \pdt g_2^{n+1}.
\end{alignat*}
\subsection{Error Equations}

We use the notations  
\begin{alignat*}{1}
U^n=& \uu^n-u^n, \quad W^n=\ww^n-w^n, \quad \Lambda^{n}=\lla^n-\lambda^n \\
\mathcal{U}^{n+1}=& \pdt U^{n+1}, \quad \mathcal{W}^{n+1}=\pdt W^{n+1}, \quad \LL^{n+1}=\pdt \Lambda^{n+1}\\
\mathfrak{U}^{n+1}=& \pdt \mathcal{U}^{n+1}, \quad \mathfrak{W}^{n+1}=\pdt \mathcal{W}^{n+1}, \quad \mathfrak{L}^{n+1}=\pdt \LL^{n+1},
\end{alignat*}
and 
\begin{alignat*}{2}
H_j^{n+1}&=\pdt h_j^{n+1},\\
\cG_j^{n+1}&=\pdt G_j^{n+1},  \\
\cH_j^{n+1}&=\pdt H_j^{n+1},
\end{alignat*}
for $j=1,2$.

By subtracting the exact solutions and the numerical solutions, we have the following error equations. For $n=0$, we have
\begin{subequations}\label{eq:d11}
\begin{align}
(\pdt W^2,z)_s+\nu_s(\nabla W^{1}, \nabla z)_s +\alpha \bl W^{1} +U^2-2U^1, z \br& + \bl 2\Lla^1-\Lla^2, z\br \\
=& -(h_1^{1},z)_s+\alpha\bl g_1^1,z\br - \bl g_2^1,z\br, \nonumber\\
(\pdt U^2,z)_f+\nu_f (\nabla U^{1}, \nabla v)_f+ \bl\alpha(\delta U^3-\delta U^2)+\Lla^{3}-2\Lla^2, v\br &= -(h_2^1,v)_f, \\
\alpha\bl -U^{3}+2U^2-W^{1}, \mu \br+\bl \Lla^2-\Lla^1, \mu \br &= \bl g_2^1,\mu\br
\end{align}
\end{subequations}
and for $n\ge 1$:
\begin{subequations}\label{eq:dstandard1}
\begin{alignat}{1}
(\pdt W^{n+1}, z)_s+ \nu_s(\nabla W^{n+1}, \nabla z)_s+\alpha \bl (W^{n+1} - U^n), z\br+ \bl \Lla^n, z \br =& L^{n+1}_1(z) - L^{n+1}_3(z) , \\
 (\pdt U^{n+1}, v)_f+ \nu_f (\nabla U^{n+1}, \nabla v)_f -  \bl  \Lla^{n+1},v\br
= & L^{n+1}_2(v), \\
 \alpha \bl U^{n+1} -  W^{n+1},\mu\br +\bl \Lla^{n+1} - \Lla^{n}, \mu\br = & L^{n+1}_3(\mu).
\end{alignat}
\end{subequations}

\subsubsection{First-order time difference}
Furthermore, using \eqref{eq:d11} and \eqref{eq:dstandard1}, we have, for $n=1$
\begin{subequations}\label{eq:dd11}
\begin{alignat}{1}
\nu_s(\nabla \mathcal{W}^{2}, \nabla z)_s +\alpha \bl \mathcal{W}^2 - \mathcal{U}^2, z\br+\bl \LL^2, z\br =& R^{2}_1(z) - R^{2}_3(z), \label{eq:dd111}\\
\nu_f (\nabla \mathcal{U}^{2}, \nabla v)_f-\alpha\bl \mathcal{U}^3-\mathcal{U}^2, v\br-\bl\LL^3,v\br 
= & R^{2}_2(v), \label{eq:dd112}\\
\alpha\bl \mathcal{U}^3-\mathcal{W}^2,\mu\br = & R^{2}_3(\mu),\label{eq:dd113}
\end{alignat}
\end{subequations}
and $n\ge2$:
\begin{subequations}\label{eq:ddstandard1}
\begin{alignat}{1}
(\pdt \mathcal{W}^{n+1}, z)_s+ \nu_s(\nabla \mathcal{W}^{n+1}, \nabla z)_s+\alpha \bl (\mathcal{W}^{n+1} - \mathcal{U}^n), z\br+ \bl \LL^n, z \br =& R^{n+1}_1(z) - R^{n+1}_3(z) , \label{eq:ddstandard11}\\
 (\pdt \mathcal{U}^{n+1}, v)_f+ \nu_f (\nabla \mathcal{U}^{n+1}, \nabla v)_f -  \bl  \LL^{n+1},v\br
= & R^{n+1}_2(v),\label{eq:ddstandard21} \\
 \alpha \bl \mathcal{U}^{n+1} -  \mathcal{W}^{n+1},\mu\br +\bl \LL^{n+1} - \LL^{n}, \mu\br = & R^{n+1}_3(\mu),\label{eq:ddstandard31}
\end{alignat}
\end{subequations}
where
\begin{alignat*}{1}
R_1(z) :=& \alpha \bl G_1^{n+1}, z \br - ( H_1^{n+1}, z)_s,\\
R_3(v):=&-(H_2^{n+1},v)_f,\\
R_4(\mu):=&  \bl G_2^{n+1}, \mu \br.
\end{alignat*}

\subsubsection{Second-order time difference}
Subtract \eqref{eq:dd11} and \eqref{eq:ddstandard1} and divide them by $\dt$, we have, for $n=2$,
\begin{subequations}\label{eq:ddd1}
\begin{alignat}{1}
\frac{1}{\dt}(\mathfrak{W}^3, z)_s+\nu_s(\nabla \mathfrak{W}^{3}, \nabla z)_s +\alpha \bl \mathfrak{W}^3, z\br =& \cR^{3}_1(z) - \cR^{3}_3(z), \label{eq:ddd11}\\
\frac{1}{\dt}(\mathfrak{U}^3, v)_f+\nu_f (\nabla \mathfrak{U}^{3}, \nabla v)_f+\alpha\bl \mathfrak{U}^3, v\br 
= & \cR^{3}_2(v), \label{eq:ddd12}\\
\bl \mathfrak{L}^3-\alpha\mathfrak{W}^3,\mu\br = & \cR^{3}_3(\mu),\label{eq:ddd13}
\end{alignat}
\end{subequations}
and $n\ge3$,
\begin{subequations}\label{errorRRd}
\begin{alignat}{1}
(\pdt \fW^{n+1}, z)_s+ \nu_s(\nabla \fW^{n+1}, \nabla z)_s+\alpha \bl (\fW^{n+1} - \fU^n), z\br+ \bl \fL^n, z \br =& \cR^{n+1}_1(z) - \cR^{n+1}_3(z) ,\label{errorRRd1d} \\
 (\pdt \fU^{n+1}, v)_f+ \nu_f (\nabla \fU^{n+1}, \nabla v)_f -  \bl  \fL^{n+1},v\br
= & \cR^{n+1}_2(v), \label{errorRRd2d}\\
 \alpha \bl \fU^{n+1} -  \fW^{n+1},\mu\br +\bl \fL^{n+1} - \fL^{n}, \mu\br = & \cR^{n+1}_3(\mu),\label{errorRRd3d}
\end{alignat}
\end{subequations}
where 
\begin{alignat*}{1}
\cR^{n+1}_1(z) :=& \alpha \bl \cG_1^{n+1}, z \br - ( \cH_1^{n+1}, z)_s,\\
\cR^{n+1}_2(v):=&-(\cH_2^{n+1},v)_f,\\
\cR^{n+1}_3(\mu):=&  \bl \cG_2^{n+1}, \mu \br.
\end{alignat*}

\subsection{Sketch of the Analysis}

Take $z=\mathfrak{W}^{n+1}$ in \eqref{errorRRd1d} and $v=\mathfrak{U}^{n+1}$ in \eqref{errorRRd2d} and use \eqref{errorRRd3d}, we have (cf. \cite[Lemma 3.3]{burman2024estimates}),

\begin{lemma}\label{lem:improveddsum}
It holds, for $n\ge3$,
\begin{alignat*}{1}
Z^{n+1}(\mathfrak{U},\mathfrak{W},\mathfrak{L})+S^{n+1}(\mathfrak{U},\mathfrak{W},\mathfrak{L})=Z^n(\mathfrak{U},\mathfrak{W},\mathfrak{L}) + \dt \mathfrak{F}^{n+1}  +\frac{\dt}{\alpha} \bl \mathcal{G}_2^{n+1}, \mathfrak{L}^{n+1} \br,
\end{alignat*}
where
\begin{alignat*}{1}
\mathfrak{F}^{n+1}:= & -(\mathcal{H}_1^{n+1}, \mathfrak{W}^{n+1})_s-(\mathcal{H}_2^{n+1}, \mathfrak{U}^{n+1})_f + \alpha \bl \mathcal{G}_1^{n+1}, \mathfrak{W}^{n+1} \br + \bl \mathfrak{U}^{n+1}-\mathfrak{U}^n, \mathcal{G}_2^{n+1} \br. 
\end{alignat*}
\end{lemma}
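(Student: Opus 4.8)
The plan is to run the standard discrete energy argument for the Robin--Robin scheme, now at the level of the second-order time differences $(\fU,\fW,\fL)$, while keeping every consistency term intact so that they reassemble into the right-hand side of the asserted equality (this being the exact analogue of \cite[Lemma 3.3]{burman2024estimates}). Following the instruction preceding the statement, I would test \eqref{errorRRd1d} with $z=\fW^{n+1}$, test \eqref{errorRRd2d} with $v=\fU^{n+1}$, add the two scalar identities, and multiply through by $\dt$. The two temporal terms are treated with the polarization identity $\dt(\pdt\phi^{n+1},\phi^{n+1})=\tfrac12\bigl(\|\phi^{n+1}\|^2-\|\phi^{n}\|^2+\|\phi^{n+1}-\phi^{n}\|^2\bigr)$ used in $L^2(\Omega_s)$ and $L^2(\Omega_f)$; this yields the bulk $L^2$ parts of $Z^{n+1}-Z^{n}$ together with the jump terms $\tfrac12\|\fW^{n+1}-\fW^{n}\|_{L^2(\Omega_s)}^2$ and $\tfrac12\|\fU^{n+1}-\fU^{n}\|_{L^2(\Omega_f)}^2$ of $S^{n+1}$, while the two diffusion terms furnish directly the gradient contributions $\dt\nu_s\|\nabla\fW^{n+1}\|^2$ and $\dt\nu_f\|\nabla\fU^{n+1}\|^2$ of $S^{n+1}$.

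The decisive step is the interface block $\alpha\bl\fW^{n+1}-\fU^{n},\fW^{n+1}\br+\bl\fL^{n},\fW^{n+1}\br-\bl\fL^{n+1},\fU^{n+1}\br$, which must be converted into the interface parts of $Z^{n+1}-Z^{n}$ and the Robin jump term of $S^{n+1}$ using only the coupling relation \eqref{errorRRd3d}. I would first split $\fL^{n}=\fL^{n+1}-(\fL^{n+1}-\fL^{n})$ so that the multiplier contribution becomes $\bl\fL^{n+1},\fW^{n+1}-\fU^{n+1}\br-\bl\fL^{n+1}-\fL^{n},\fW^{n+1}\br$, and then remove the interface mismatches by testing \eqref{errorRRd3d} with three choices of $\mu$. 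Taking $\mu=\fL^{n+1}$ turns $\bl\fL^{n+1},\fW^{n+1}-\fU^{n+1}\br$ into $\tfrac1\alpha\bl\fL^{n+1}-\fL^{n},\fL^{n+1}\br-\tfrac1\alpha\cR^{n+1}_3(\fL^{n+1})$; taking $\mu=\fW^{n+1}$ and $\mu=\fU^{n+1}-\fU^{n}$ recasts the Robin term $\alpha\bl\fW^{n+1}-\fU^{n},\fW^{n+1}\br$, the contribution $+\bl\fL^{n+1}-\fL^{n},\fW^{n+1}\br$ it produces cancelling the matching term from the multiplier split and leaving $\alpha\bl\fU^{n+1}-\fU^{n},\fU^{n+1}\br+\bl\fU^{n+1}-\fU^{n},\fL^{n+1}-\fL^{n}\br$ together with the remainders $-\bl\cG_2^{n+1},\fW^{n+1}\br-\bl\fU^{n+1}-\fU^{n},\cG_2^{n+1}\br$. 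Applying polarization once more to $\alpha\bl\fU^{n+1}-\fU^{n},\fU^{n+1}\br$ and to $\tfrac1\alpha\bl\fL^{n+1}-\fL^{n},\fL^{n+1}\br$, and completing the square via $\tfrac\alpha2\|x\|^2+\bl x,y\br+\tfrac1{2\alpha}\|y\|^2=\tfrac\alpha2\|x+\tfrac1\alpha y\|^2$ with $x=\fU^{n+1}-\fU^{n}$, $y=\fL^{n+1}-\fL^{n}$, reproduces after multiplication by $\dt$ precisely the interface part of $Z^{n+1}-Z^{n}$ and the term $\tfrac{\alpha\dt}{2}\|\fU^{n+1}-\fU^{n}+\tfrac1\alpha(\fL^{n+1}-\fL^{n})\|_{L^2(\Sigma)}^2$ of $S^{n+1}$.

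It then remains to collect the consistency contributions. Passing the remainders generated by the interface manipulation to the right-hand side and adding $\dt\bigl(\cR^{n+1}_1(\fW^{n+1})-\cR^{n+1}_3(\fW^{n+1})+\cR^{n+1}_2(\fU^{n+1})\bigr)$, the crucial cancellation is that $\cR^{n+1}_3(\fW^{n+1})=\bl\cG_2^{n+1},\fW^{n+1}\br$ annihilates the stray term $\bl\cG_2^{n+1},\fW^{n+1}\br$ produced above. Inserting the definitions $\cR^{n+1}_1(\fW^{n+1})=\alpha\bl\cG_1^{n+1},\fW^{n+1}\br-(\cH_1^{n+1},\fW^{n+1})_s$ and $\cR^{n+1}_2(\fU^{n+1})=-(\cH_2^{n+1},\fU^{n+1})_f$, what survives is exactly $\dt\fF^{n+1}$ together with $\tfrac1\alpha\cR^{n+1}_3(\fL^{n+1})=\tfrac1\alpha\bl\cG_2^{n+1},\fL^{n+1}\br$ carrying its $\dt$ factor, which is the claimed identity.

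I expect the main obstacle to be the interface algebra of the second paragraph: the three test functions in \eqref{errorRRd3d} must be chosen so that the Robin jump term of $S^{n+1}$ emerges with the exact constant $\tfrac{\alpha\dt}{2}$ and so that the only interface consistency term in $\fW^{n+1}$ that survives is the one annihilated by $\cR^{n+1}_3(\fW^{n+1})$. The sign bookkeeping in the polarization and completing-the-square steps is where slips are most likely; and since the lemma is an exact equality, the quantities $\cG_1^{n+1},\cG_2^{n+1},\cH_1^{n+1},\cH_2^{n+1}$ must be carried verbatim rather than estimated.
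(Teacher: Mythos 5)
Your proposal is correct and follows exactly the route the paper indicates: test \eqref{errorRRd1d} with $z=\fW^{n+1}$ and \eqref{errorRRd2d} with $v=\fU^{n+1}$, use \eqref{errorRRd3d} to rewrite the interface block, and apply polarization plus completing the square, with $-\cR_3^{n+1}(\fW^{n+1})$ cancelling the stray $\bl\cG_2^{n+1},\fW^{n+1}\br$ term so that only $\dt\,\fF^{n+1}$ and $\tfrac{\dt}{\alpha}\bl\cG_2^{n+1},\fL^{n+1}\br$ survive. This is precisely the argument of \cite[Lemma 3.3]{burman2024estimates} transposed to the second-order differences, which is all the paper's one-line proof does.
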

Hence, by taking the sum from $n=3$ to $n=N-1$,
\begin{lemma}\label{lemma:zdsum2d}
We have,
\begin{alignat}{1}\label{eq:zdsum2d}
Z^N(\mathfrak{U},\mathfrak{W},\mathfrak{L})+ \sum_{n=3}^{N-1} S^{n+1}(\mathfrak{U},\mathfrak{W},\mathfrak{L}) =  Z^3(\mathfrak{U},\mathfrak{W},\mathfrak{L}) + \dt \sum_{n=3}^{N-1} \mathfrak{F}^{n+1} + \frac{\dt}{\alpha} \sum_{n=3}^{N-1} \bl \mathcal{G}_2^{n+1}, \mathfrak{L}^{n+1} \br.
\end{alignat}  
\end{lemma}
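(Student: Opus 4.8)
The plan is to obtain the stated identity by summing the per-step relation of Lemma~\ref{lem:improveddsum} over the time index and exploiting the telescoping structure of the $Z$ terms. Since Lemma~\ref{lem:improveddsum} already encodes the energy balance at each step, no new testing or energy argument is required here; the work is purely algebraic bookkeeping, and the result should follow immediately.

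First I would write the identity of Lemma~\ref{lem:improveddsum} for each fixed $n$ in the range $3 \le n \le N-1$, namely
\[
Z^{n+1}(\fU,\fW,\fL) + S^{n+1}(\fU,\fW,\fL) = Z^{n}(\fU,\fW,\fL) + \dt\,\fF^{n+1} + \frac{\dt}{\alpha}\bl \cG_2^{n+1}, \fL^{n+1}\br,
\]
and then sum both sides over $n$ from $3$ to $N-1$.

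Next I would separate out the $Z$ contributions on the two sides. On the left the terms $Z^{n+1}$ run over the indices $4,\dots,N$, whereas on the right the terms $Z^{n}$ run over $3,\dots,N-1$. Bringing all $Z$ terms to the left and cancelling the overlapping indices $4,\dots,N-1$ leaves exactly $Z^{N}(\fU,\fW,\fL) - Z^{3}(\fU,\fW,\fL)$. The remaining quantities do not telescope and are simply carried along as the sums $\sum_{n=3}^{N-1} S^{n+1}(\fU,\fW,\fL)$, $\dt\sum_{n=3}^{N-1}\fF^{n+1}$, and $\frac{\dt}{\alpha}\sum_{n=3}^{N-1}\bl \cG_2^{n+1}, \fL^{n+1}\br$. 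Rearranging so as to move $Z^{3}$ to the right-hand side then yields \eqref{eq:zdsum2d}.

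The only point demanding care, and thus the main (and essentially the sole) obstacle, is getting the summation limits and the telescoping endpoints exactly right: one must verify that the surviving lower endpoint is $Z^{3}$ rather than $Z^{4}$ and that the upper endpoint is $Z^{N}$, which relies on Lemma~\ref{lem:improveddsum} holding for \emph{every} $n$ with $3 \le n \le N-1$. Beyond this index accounting there is no analytic difficulty, so I expect the proof to be a one-line consequence of the telescoping sum.
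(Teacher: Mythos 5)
Your proof is correct and matches the paper's argument exactly: the paper also obtains \eqref{eq:zdsum2d} by summing the identity of Lemma~\ref{lem:improveddsum} over $n=3,\dots,N-1$ and telescoping the $Z$ terms. Your attention to the endpoint indices ($Z^3$ surviving on the right, $Z^N$ on the left) is the only subtlety, and you handle it correctly.
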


Using Lemma \ref{lemma:zdsum2d}, we obtain

\begin{theorem}\label{thm:main}
  We have,
\begin{equation}\label{eq:Z2esti1}
\begin{aligned}
  Z^N&(\mathfrak{U},\mathfrak{W},\mathfrak{L})+ \sum_{n=3}^{N-1} S^{n+1}(\mathfrak{U},\mathfrak{W},\mathfrak{L}) +\frac12\|\mathfrak{W}^3\|^2_{L^2(\Omega_s)}+\frac12\|\mathfrak{U}^3\|^2_{L^2(\Omega_f)}\\
  &+\dt\nu_s\|\nabla\mathfrak{W}^3\|^2_{L^2(\Omega_s)}+\dt\nu_f\|\nabla\mathfrak{U}^3\|^2_{L^2(\Omega_f)}+\frac{\dt}{2\alpha}\|\mathfrak{L}^3\|^2_{L^2(\Sigma)}+\frac{\alpha\dt}{2}\|\mathfrak{U}^3\|^2_{L^2(\Sigma)}\le C(\dt)^2.
\end{aligned}
\end{equation}
\end{theorem}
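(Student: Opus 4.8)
The plan is to use Lemma \ref{lemma:zdsum2d} to reduce the estimate to controlling three contributions: the startup energy $Z^3(\mathfrak{U},\mathfrak{W},\mathfrak{L})$ together with the initial quadratic terms appearing on the left-hand side of \eqref{eq:Z2esti1}, the consistency sum $\dt\sum_{n=3}^{N-1}\mathfrak{F}^{n+1}$, and the boundary sum $\frac{\dt}{\alpha}\sum_{n=3}^{N-1}\bl\mathcal{G}_2^{n+1},\mathfrak{L}^{n+1}\br$. The backbone of every bound is the observation that, since the data $g_j,h_j$ are either truncation errors ($h_1,h_2$) or single time increments ($g_1,g_2$) of smooth functions, all of the twice-differenced quantities $\mathcal{G}_1^{n+1},\mathcal{G}_2^{n+1},\mathcal{H}_1^{n+1},\mathcal{H}_2^{n+1}$ are $O(\dt)$ uniformly in $n$. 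Consequently $\dt\sum_{n=3}^{N-1}\|\mathcal{G}_j^{n+1}\|^2$ and $\dt\sum_{n=3}^{N-1}\|\mathcal{H}_j^{n+1}\|^2$ are $O(\dt^2)$, which is the origin of the claimed rate.

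First I would treat the startup. Testing the $n=2$ system \eqref{eq:ddd1} with $z=\mathfrak{W}^3$ and $v=\mathfrak{U}^3$ and using \eqref{eq:ddd13} with $\mu=\mathfrak{L}^3$ produces exactly the quadratic quantities $\|\mathfrak{W}^3\|^2_{L^2(\Omega_s)}$, $\|\mathfrak{U}^3\|^2_{L^2(\Omega_f)}$, $\dt\nu_s\|\nabla\mathfrak{W}^3\|^2_{L^2(\Omega_s)}$, $\dt\nu_f\|\nabla\mathfrak{U}^3\|^2_{L^2(\Omega_f)}$, $\dt\alpha\|\mathfrak{U}^3\|^2_{L^2(\Sigma)}$ and $\frac{\dt}{\alpha}\|\mathfrak{L}^3\|^2_{L^2(\Sigma)}$ on the left, tested against $\dt\,\mathcal{R}^3_j$ evaluated at these same functions on the right. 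Since the factor $\frac1\dt$ in front of the mass terms of \eqref{eq:ddd1} cancels after multiplying through by $\dt$, a Cauchy--Schwarz and Young argument, combined with the $O(\dt)$ bounds on $\mathcal{G}^3_j,\mathcal{H}^3_j$, shows that all these startup terms, hence also $Z^3(\mathfrak{U},\mathfrak{W},\mathfrak{L})$, are bounded by $C\dt^2$. This step is precisely where the modification of the first time step is used: the coupled structure of \eqref{eq:firststep} is engineered so that the troublesome low-order startup contributions cancel and \eqref{eq:ddd1} carries $O(\dt)$ data rather than $O(1)$ data.

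Next I would bound the consistency sum $\dt\sum\mathfrak{F}^{n+1}$. For the three volume/trace terms $-(\mathcal{H}_1^{n+1},\mathfrak{W}^{n+1})_s$, $-(\mathcal{H}_2^{n+1},\mathfrak{U}^{n+1})_f$ and $\alpha\bl\mathcal{G}_1^{n+1},\mathfrak{W}^{n+1}\br$, Cauchy--Schwarz and Young split each into an $O(\dt)$ data factor and a solution factor; the data factors sum to $O(\dt^2)$ as above, while the solution factors are of the form $\eps\dt\sum Z^{n+1}$ (using $\|\mathfrak{W}^{n+1}\|^2_{L^2(\Omega_s)}\le 2Z^{n+1}$ and the trace term of $Z^{n+1}$), to be absorbed by discrete Gronwall. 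The delicate term is $\bl\mathfrak{U}^{n+1}-\mathfrak{U}^n,\mathcal{G}_2^{n+1}\br$, together with the boundary sum $\frac{\dt}{\alpha}\sum\bl\mathcal{G}_2^{n+1},\mathfrak{L}^{n+1}\br$, because neither $\mathfrak{U}^{n+1}|_\Sigma$ nor $\mathfrak{L}^{n+1}|_\Sigma$ is controlled directly---only the combination $\mathfrak{U}^{n+1}-\mathfrak{U}^n+\frac1\alpha(\mathfrak{L}^{n+1}-\mathfrak{L}^n)$ appears inside $S^{n+1}$, and $\mathfrak{L}$ is controlled only with a $\dt$ weight in $Z^{n+1}$. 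I would write $\mathfrak{U}^{n+1}-\mathfrak{U}^n=\big(\mathfrak{U}^{n+1}-\mathfrak{U}^n+\tfrac1\alpha(\mathfrak{L}^{n+1}-\mathfrak{L}^n)\big)-\tfrac1\alpha(\mathfrak{L}^{n+1}-\mathfrak{L}^n)$, absorb the first piece into the last term of $S^{n+1}$, and handle the second piece---as well as $\frac{\dt}{\alpha}\sum\bl\mathcal{G}_2^{n+1},\mathfrak{L}^{n+1}\br$---by a summation by parts in $n$ that transfers the increment onto $\mathcal{G}_2$; since $\mathcal{G}_2^{n+1}-\mathcal{G}_2^n=\dt\,\pdt\mathcal{G}_2^{n+1}$ is $O(\dt^2)$, the resulting interior sum and the boundary terms at $n=3,N$ are either $O(\dt^2)$ or of the form $\eps\sum Z^{n}$ absorbable by the trace term $\frac{\dt}{2\alpha}\|\mathfrak{L}^n\|^2_{L^2(\Sigma)}$ in $Z^n$.

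Finally, collecting all estimates into \eqref{eq:zdsum2d} yields an inequality of the form $Z^N(\mathfrak{U},\mathfrak{W},\mathfrak{L})+\sum_{n=3}^{N-1}S^{n+1}(\mathfrak{U},\mathfrak{W},\mathfrak{L})+[\text{startup terms}]\le C\dt^2+C\eps\,\dt\sum_{n=3}^{N}Z^n$, and a discrete Gronwall inequality (with $\eps$ fixed small and $\dt$ small enough that $C\eps\dt<1$) closes the estimate and gives \eqref{eq:Z2esti1}. I expect the main obstacle to be the rigorous treatment of the coupled boundary terms in the previous paragraph: keeping track of the $\dt$ weights on $\mathfrak{L}|_\Sigma$ and on $\mathfrak{U}|_\Sigma$, choosing the summation by parts so that every leftover is genuinely $O(\dt^2)$ or absorbable, and verifying that the startup identity derived from \eqref{eq:ddd1} produces exactly the six quadratic quantities listed on the left of \eqref{eq:Z2esti1} with the correct signs.
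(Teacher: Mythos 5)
Your proposal follows essentially the same route as the paper: invoke Lemma \ref{lemma:zdsum2d}, test the $n=2$ startup system \eqref{eq:ddd1} with $\mathfrak{W}^3$ and $\mathfrak{U}^3$ to generate exactly the six quadratic startup quantities (which is how the paper cancels the $Z^3$ term on the right of \eqref{eq:zdsum2d}), and then estimate $\mathfrak{J}^3$, the sum of $\mathfrak{F}^{n+1}$, and the boundary sum by Cauchy--Schwarz/Young, summation by parts on the $\mathcal{G}_2$ terms, and discrete Gronwall. The paper's proof is only a sketch that defers these last estimates to \cite{burman2024estimates}; your write-up fills in that detail in a manner consistent with the intended argument, so no substantive discrepancy.
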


\begin{proof}[Sketch of the proof]
Take $z=\mathfrak{W}^{3}$ in \eqref{eq:ddd11} and $v=\mathfrak{U}^{3}$ in \eqref{eq:ddd12} and use \eqref{eq:ddd13}, we have

\begin{equation}\label{eq:z2d}
\begin{aligned}
  \|&\mathfrak{W}^3\|^2_{L^2(\Omega_s)}+\|\mathfrak{U}^3\|^2_{L^2(\Omega_f)}+\dt\nu_s\|\nabla\mathfrak{W}^3\|^2_{L^2(\Omega_s)}+\dt\nu_f\|\nabla\mathfrak{U}^3\|^2_{L^2(\Omega_f)}\\
  &+\frac{\dt}{\alpha}\|\mathfrak{L}^3\|^2_{L^2(\Sigma)}+\alpha\dt\|\mathfrak{U}^3\|^2_{L^2(\Sigma)}\\
=&\dt\Big[\cR^3_1(\mathfrak{W}^3)-\cR_3^3(\mathfrak{W}^3)+\cR_2^3(\mathfrak{U}^3)+\bl\cG_2^3,\mathfrak{W}^3\br+\frac{1}{\alpha}\bl\cG_2^3,\mathfrak{L}^3\br\Big]:=\dt\mathcal{J}^3
\end{aligned}
\end{equation}

Adding \eqref{eq:z2d} and \eqref{eq:zdsum2d}, we obtain
\begin{equation}\label{eq:Z2estid}
\begin{aligned}
  Z^N&(\mathfrak{U},\mathfrak{W},\mathfrak{L})+ \sum_{n=3}^{N-1} S^{n+1}(\mathfrak{U},\mathfrak{W},\mathfrak{L}) +\frac12\|\mathfrak{W}^3\|^2_{L^2(\Omega_s)}+\frac12\|\mathfrak{U}^3\|^2_{L^2(\Omega_f)}\\
  &+\dt\nu_s\|\nabla\mathfrak{W}^3\|^2_{L^2(\Omega_s)}+\dt\nu_f\|\nabla\mathfrak{U}^3\|^2_{L^2(\Omega_f)}+\frac{\dt}{2\alpha}\|\mathfrak{L}^3\|^2_{L^2(\Sigma)}+\frac{\alpha\dt}{2}\|\mathfrak{U}^3\|^2_{L^2(\Sigma)}\\
  =&\dt\mathfrak{J}^3+\dt \sum_{n=3}^{N-1} \mathfrak{F}^{n+1} + \frac{\dt}{\alpha} \sum_{n=3}^{N-1} \bl \mathcal{G}_2^{n+1}, \mathfrak{L}^{n+1} \br.
\end{aligned}
\end{equation}
The terms involving $\mathfrak{J}^3$, $\mathcal{F}^{n+1}$ and the last term in \eqref{eq:Z2estid} can be estimated similarly as in \cite{burman2024estimates} and then we obtain the results immediately.
\end{proof}

\begin{remark}
Theorem \ref{thm:main} indicates that the term $\big(\dt\sum_{n=2}^{N-1} \nu_f\|\nabla(U^{n+1}-2U^n+U^{n-1})\|^2_{L^2(\Omega_f)}\big)^\frac12$ is of order $O((\dt)^3)$, which is a crucial ingredient for a defect correction scheme in \cite{ppcorrection}. The idea of the modified scheme is to construct the first time step \eqref{eq:firststep} such that the first step of the error equations of the second-order time difference \eqref{eq:ddd1} have this particular form, so that when performing the analysis, the $Z^3$ term on the right-hand side of \eqref{eq:zdsum2d} is canceled. We also observe $O((\dt)^2)$ convergence rates for the first difference of the errors and we also believe this idea can be used to prove the $H^2$ estimate in \cite{burman2024estimates}. We omit the details here.
\end{remark}

\section{Numerical Results}

In this section, we present three numerical examples to support our findings. We begin by recomputing Example 5.1 from \cite{burman2024estimates} to demonstrate that the improved scheme indeed achieves better convergence rates for the terms in $\Sigma_{n=2} S^{n+1}$. We then include two additional examples with different exact solutions. We define the following notations:
\begin{alignat*}{2}
    &e_u= \|U^N\|_{L^2(\Omega_f)},\quad &&e_{du}=  \|U^N-U^{N-1}\|_{L^2(\Omega_f)},\\
    &e_{dw}=  \|W^N-W^{N-1}\|_{L^2(\Omega_s)},\quad &&e_{gdu}=  \|\nabla(U^N-U^{N-1})\|_{L^2(\Omega_f)}.
  \end{alignat*}
and 
\begin{alignat*}{2}
  &e_{gdus}= (\Delta t\sum_{n=1}^{N-1}\|\nabla(U^{n+1}-U^n)\|^2_{L^2(\Omega_f)})^\frac12, &&e_{gdws}= (\Delta t\sum_{n=1}^{N-1}\|\nabla(W^{n+1}-W^n)\|^2_{L^2(\Omega_s)})^\frac12,\\
  &e_{dls}= (\Delta t\sum_{n=1}^{N-1}\|\Lambda^{n+1}-\Lambda^n\|^2_{L^2(\Sigma)})^\frac12, &&e_{gdu2s}= (\Delta t\sum_{n=2}^{N-1}\|\nabla(U^{n+1}-2U^{n}+U^{n-1})\|^2_{L^2(\Omega_f)})^\frac12,\\
  &e_{ggdus}= (\Delta t\sum_{n=1}^{N-1}\|D^2(U^{n+1}-U^n)\|^2_{L^2(\Omega_f)})^\frac12.\\
\end{alignat*}

\begin{example}\label{ex:ex1}
We take 
$$\uu=\ww=e^{-2\pi^2t}\cos(\pi x_1)\sin(\pi x_2),$$ $h=\Delta t$ and $\alpha=4$ in this example as in \cite[Example 5.1]{burman2024estimates}. We set $\Omega_f=[0,1]\times[0,0.75]$ and $\Omega_s=[0,1]\times[0.75,1]$. Here at level $k$ we set $\Delta t=(\frac12)^{k+1}$. 
\end{example}

We report the convergence rates in Tables \ref{table:ex1}-\ref{table:ex1s}. We observe the same convergence rates for quantities at final time $N$ as in \cite[Example 5.1]{burman2024estimates}. In contrast to \cite{erratum}, we observe $O((\dt)^2)$ for the sum of the first difference of the errors and $O((\dt)^3)$ for the sum of the second difference of the errors. These convergence rates coincide with our assumptions in \cite{ppcorrection}.

\begin{table}[H]
\centering
\caption{Convergence results for Example \ref{ex:ex3} at final time with linear finite element ($h=\dt$).}\label{table:ex1}
\begin{tabular}{ccccccccc}
\hline\\
\vspace{-0.2cm}
$k$&$e_{u}$&Order&$e_{du}$&Order&$e_{dw}$&Order&$e_{gdu}$&Order\\
\\
\hline
1&4.65e-01&-&2.05e-02&-&1.76e-02&-&8.18e-02&-\\
2&4.34e-01&0.10&6.09e-02&-1.57&3.67e-02&-1.06&2.33e-01&-1.51\\
3&4.25e-02&3.35&3.77e-02&0.69&2.35e-02&0.64&1.45e-01&0.69\\
4&1.50e-02&1.50&7.59e-03&2.31&4.02e-03&2.55&2.95e-02&2.30\\
5&5.77e-03&1.38&1.52e-03&2.32&7.17e-04&2.49&5.97e-03&2.30\\
6&2.46e-03&1.23&3.24e-04&2.23&1.43e-04&2.33&1.28e-03&2.22\\
7&1.12e-03&1.14&7.35e-05&2.14&3.12e-05&2.19&2.93e-04&2.14\\
8&5.26e-04&1.09&1.72e-05&2.09&7.20e-06&2.12&6.87e-05&2.09\\
\hline
\end{tabular}
\end{table}

\begin{table}[H]
\centering
\caption{Convergence results for Example \ref{ex:ex3} of the sum with linear finite element ($h=\dt$).}\label{table:ex1s}
\begin{tabular}{ccccccc}
\hline\\
\vspace{-0.2cm}
$k$&$e_{gdus}$&Order&$e_{gdws}$&Order&$e_{gdu2s}$&Order\\
\\
\hline
3&7.37e-02&-&9.42e-02&-&4.58e-02&-\\
4&4.07e-02&0.86&5.20e-02&0.86&1.18e-02&1.96\\
5&1.53e-02&1.41&1.92e-02&1.44&2.09e-03&2.50\\
6&4.64e-03&1.72&5.77e-03&1.73&3.08e-04&2.76\\
7&1.25e-03&1.89&1.56e-03&1.89&4.09e-05&2.91\\
8&3.08e-04&2.02&3.96e-04&1.98&5.03e-06&3.02\\
\hline
\end{tabular}
\end{table}

\begin{example}\label{ex:ex2}
We take 
$$\uu=\ww=(t^3+1)\cos(\pi x_1)\sin(\pi x_2),$$ in this example. Other parameters are identical as those in Example \ref{ex:ex1}.
\end{example}

We report the convergence rates in Tables \ref{table:ex2}–\ref{table:ex2s}. In this example, we use quadratic finite elements to eliminate any impact of the spatial discretization on the convergence. Similar convergence rates are observed as in Example \ref{ex:ex1}.

\begin{table}[H]
\centering
\caption{Convergence results for Example \ref{ex:ex2} at final time with quadratic finite element ($h=\dt$).}\label{table:ex2}
\begin{tabular}{ccccccccc}
\hline\\
\vspace{-0.2cm}
$k$&$e_{u}$&Order&$e_{du}$&Order&$e_{dw}$&Order&$e_{gdu}$&Order\\
\\
\hline
1&2.10e-01&-&1.45e-01&-&2.53e-02&-&5.87e-01&-\\
2&2.77e-02&2.93&1.88e-02&2.95&3.45e-03&2.87&7.59e-02&2.95\\
3&8.34e-04&5.05&2.87e-04&6.03&3.73e-04&3.21&1.15e-03&6.04\\
4&4.46e-04&0.90&7.33e-05&1.97&1.14e-04&1.71&2.93e-04&1.98\\
5&2.31e-04&0.95&1.83e-05&2.00&3.09e-05&1.88&7.32e-05&2.00\\
6&1.18e-04&0.98&4.55e-06&2.01&8.06e-06&1.94&1.83e-05&2.00\\
7&5.93e-05&0.99&1.14e-06&2.00&2.05e-06&1.97&4.56e-06&2.00\\
8&2.98e-05&0.99&2.83e-07&2.00&5.19e-07&1.99&1.14e-06&2.00\\
\hline
\end{tabular}
\end{table}

\begin{table}[H]
\centering
\caption{Convergence results for Example \ref{ex:ex2} of the sum with quadratic finite element ($h=\dt$).}\label{table:ex2s}
\begin{tabular}{ccccccc}
\hline\\
\vspace{-0.2cm}
$k$&$e_{gdus}$&Order&$e_{gdws}$&Order&$e_{gdu2s}$&Order\\
\\
\hline
3&4.07e-04&-&8.89e-04&-&4.81e-06&-\\
4&1.26e-04&1.69&2.58e-04&1.78&1.65e-06&1.55\\
5&3.43e-05&1.88&6.86e-05&1.91&3.77e-07&2.13\\
6&8.90e-06&1.95&1.76e-05&1.96&6.70e-08&2.49\\
7&2.26e-06&1.98&4.47e-06&1.98&1.06e-08&2.66\\
8&5.70e-07&1.99&1.13e-06&1.99&1.55e-09&2.78\\
\hline
\end{tabular}
\end{table}

\begin{example}\label{ex:ex3}
We take 
$$\uu=\ww=e^t\cos(\pi x_1)\sin(\pi x_2),$$ in this example. Other parameters are identical as those in Example \ref{ex:ex1}.
\end{example}

We report the convergence rates in Tables \ref{table:ex3}–\ref{table:ex3s}. In this example, we use quadratic finite elements to eliminate any impact of the spatial discretization on the convergence. Similar convergence rates are observed as in Example \ref{ex:ex1}.

\begin{table}[H]
\centering
\caption{Convergence results for Example \ref{ex:ex3} at final time with quadratic finite element ($h=\dt$).}\label{table:ex3}
\begin{tabular}{ccccccccc}
\hline\\
\vspace{-0.2cm}
$k$&$e_{u}$&Order&$e_{du}$&Order&$e_{dw}$&Order&$e_{gdu}$&Order\\
\\
\hline
1&5.34e-01&-&2.23e-01&-&5.90e-02&-&9.03e-01&-\\
2&2.18e-01&1.29&8.17e-02&1.45&2.36e-02&1.32&3.30e-01&1.45\\
3&6.58e-04&8.37&3.83e-05&11.06&5.10e-04&5.53&2.05e-04&10.65\\
4&3.29e-04&1.00&1.00e-05&1.93&1.31e-04&1.96&5.08e-05&2.01\\
5&1.65e-04&1.00&2.55e-06&1.98&3.33e-05&1.98&1.29e-05&1.98\\
6&8.24e-05&1.00&6.40e-07&1.99&8.40e-06&1.99&3.24e-06&1.99\\
7&4.12e-05&1.00&1.60e-07&2.00&2.11e-06&1.99&8.13e-07&2.00\\
8&2.06e-05&1.00&4.01e-08&2.00&5.28e-07&2.00&2.04e-07&2.00\\
\hline
\end{tabular}
\end{table}

\begin{table}[H]
\centering
\caption{Convergence results for Example \ref{ex:ex3} of the sum with quadratic finite element ($h=\dt$).}\label{table:ex3s}
\begin{tabular}{ccccccc}
\hline\\
\vspace{-0.2cm}
$k$&$e_{gdus}$&Order&$e_{gdws}$&Order&$e_{gdu2s}$&Order\\
\\
\hline
3&7.13e-05&-&1.39e-03&-&3.14e-06&-\\
4&2.05e-05&1.80&4.19e-04&1.73&6.45e-07&2.28\\
5&5.45e-06&1.91&1.12e-04&1.90&1.04e-07&2.63\\
6&1.40e-06&1.96&2.90e-05&1.96&1.56e-08&2.74\\
7&3.54e-07&1.98&7.35e-06&1.98&2.25e-09&2.79\\
8&8.91e-08&1.99&1.85e-06&1.99&3.13e-10&2.85\\
\hline
\end{tabular}
\end{table}

\bibliographystyle{abbrv}
\bibliography{referencesWP}

\end{document}